\newtheorem{theorem}{Theorem}[section]
\newtheorem{lemma}[theorem]{Lemma}
\newtheorem{corollary}[theorem]{Corollary}
\theoremstyle{definition}
\begin{document}
\title{Logarithmically Complete Monotonicity of Certain Ratios Involving the $k$-Gamma Function}
\author[1]{Kwara Nantomah}
\author[2]{Li Yin}

\affil[1]{Department of Mathematics, University for Development Studies, Navrongo Campus, P. O. Box 24, Navrongo, UE/R, Ghana (E-mail: knantomah@uds.edu.gh)}
\affil[2]{Department of Mathematics, Binzhou University, Binzhou City, Shandong Province, P.O.Box 256603 China (E-mail: yinli\_79@163.com)}

\date{\small Submitted to RGN Publications on: 17-October-2018, Accepted on: 26-November-2018}

\maketitle
\begin{abstract} In this paper, we prove logarithmically complete monotonicity properties of certain ratios of the $k$-gamma function.  As a consequence, we deduce some inequalities involving the $k$-gamma and $k$-trigamma functions.

\vskip1em \noindent \textbf{2010 AMS Classification: 33B15, 26A48, 26A51}

\vskip1em \noindent \textbf{Keywords and phrases: $k$-gamma function, $k$-polygamma function, logarithmically completely monotonic function, inequality}

\vskip1em \noindent \textbf{Article type:} (Research Article) \ \

\end{abstract}


\section{Introduction}

The $k$-gamma function (also known as the $k$-analogue or $k$-extension of the classical Gamma function) was defined by D\'{i}az and Pariguan \cite{Diaz-Pariguan-2007-DM}  for $k>0$ and $x\in \mathbb{C}\backslash k\mathbb{Z}$  as  
\begin{equation}\label{eqn:k-Gamma}
\Gamma_k(x)= \lim_{n\rightarrow \infty}\frac{n!k^{n}(nk)^{\frac{x}{k}-1}}{(x)_{n,k}} =  \int_0^\infty t^{x-1}e^{-\frac{t^k}{k}}\,dt,
\end{equation}
where \, $(x)_{n,k}=x(x+k)(x+2k)\dots (x+(n-1)k)$\, is the Pochhammer $k$-symbol. It satisfies the basic properties
\begin{align*}
\Gamma_{k}(x+k)&=x\Gamma_{k}(x),  \\
\Gamma_{k}(k)&=1 , \\
\Gamma_{k}(nk)&=k^{n-1}(n-1)!, \quad n\in \mathbb{N}. 
\end{align*}

The $k$-analogue of the Gauss multiplication formula is given as \cite{Rehman-et-al-2015-OM}
\begin{equation}\label{eqn:k-Gauss-Mult}
\Gamma_{k}(mx)=m^{\frac{mx}{k}-\frac{1}{2}} k^{\frac{m-1}{2}} (2\pi)^{\frac{1-m}{2}}\prod_{s=0}^{m-1}\Gamma_{k}\left(x+\frac{sk}{m} \right), \quad m\geq2 ,
\end{equation}
and by letting $x=\frac{k}{m}$, one obtains the identity
\begin{equation}\label{eqn:k-Euler-Type}
\prod_{s=1}^{m-1}\Gamma_{k}\left( \frac{sk}{m}\right) = \frac{k^{\frac{1-m}{2}} (2\pi)^{\frac{m-1}{2}}}{\sqrt{m}}, \quad m\geq2  .
\end{equation}

The logarithmic derivative of the $k$-gamma function, which is called the $k$-digamma function, is defined as  (see \cite{Kokologiannaki-Sourla-2013-JIASF}, \cite{Krasniqi-2010-SM}, \cite{Mubeen-Iqbal-2015-AIA}, \cite{Mubeen-Rehman-Shaheen-2014-BJ} )
\begin{align}
\psi_{k}(x)=\frac{d}{dx}\ln \Gamma_{k}(x)&=\frac{\ln k-\gamma}{k}-\frac{1}{x}+\sum_{n=1}^{\infty} \frac{x}{nk(nk+x)} \nonumber \\ 
&=\frac{\ln k-\gamma}{k}+\sum_{n=0}^{\infty} \left( \frac{1}{nk+k} - \frac{1}{nk+x} \right), \label{eqn:k-Digamma-Series-Rep-2}
\end{align}
and satifies the properties 
\begin{align*}
\psi_{k}(x+k)&= \frac{1}{x} + \psi_{k}(x) ,  \\
\psi_{k}(k)&= \frac{\ln k - \gamma}{k},
\end{align*}
where $\gamma$  is the Euler-Mascheroni's constant. The $k$-polygamma function of order $r$ is defined as \cite{Krasniqi-2010-SM}
\begin{equation}\label{eqn:k-Polygamma-Series}
\psi_{k}^{(r)}(x)=\frac{d^r}{dx^r}\psi_{k}(x)= (-1)^{r+1} \sum_{n=0}^{\infty}\frac{r!}{(nk+x)^{r+1}}, \quad r\in \mathbb{N}.
\end{equation}

Also, it is well known in the literature that the integral
\begin{equation}\label{eqn:integral-rep-vip}
\frac{r!}{x^{r+1}} = \int_{0}^{\infty}t^{r}e^{-xt}\,dt,
\end{equation}
holds for $x>0$ and $r\in \mathbb{N}_0=\mathbb{N}\cup \{0\}$. See for instance \cite[p. 255]{Abramowitz-Stegun-1972}.

We recall that a function $h$ is said to be completely monotonic on an interval $I$, if $h$ has derivatives of all order and satisfies
\begin{equation}\label{eqn:def-complete-monotonicity}
(-1)^{r}h^{(r)}(z)\geq0,
\end{equation}
for all $z\in I$ and $r\in \mathbb{N}_0$. If inequality \eqref{eqn:def-complete-monotonicity} is strict, then $f$ is said to be strictly completely monotonic on $I$. In particular, each completely monotonic function is positive, decreasing and convex.

A positive function $h$ is said to be logarithmically completely monotonic on an interval $I$, if $h$ satisfies \cite{Qi-Guo-2004-RGMIA}
\begin{equation}\label{eqn:def-log-complete-monotonicity}
(-1)^{r}[\ln h(z)]^{(r)} \geq0, 
\end{equation}
for all $z\in I$ and $r\in \mathbb{N}_0$. If inequality \eqref{eqn:def-log-complete-monotonicity} is strict, then $h$ is said to be strictly logarithmically completely monotonic on $I$. It has been established that, if a function is logarithmically completely
monotonic, then  it is completely monotonic \cite{Qi-Guo-2004-RGMIA}. However, the converse is not necessarily true.

Completely monotonic functions are frequently encounted in various aspects of mathematics. They are particularly useful in the theory of inequalities,  in probability theory and in potential theory. There exists an extensive literature on this subject.
See for instance \cite{Alzer-Berg-2002-AASF}, \cite{Chen-Choi-2017-MIA}, \cite{Merkle-2014-ANTATSF}, \cite{Miller-Samko-2001-ITSF}, \cite{Yin-Huang-Lin-Wang-2018-ADE}, \cite{Yin-Huang-Song-Duo-2018-JIA} and the related references therein.

In \cite{Merkle-1997-SM} it was shown that the functions
\begin{equation*}
F(x)=\frac{\Gamma(2x)}{x\Gamma^{2}(x)} \quad \text{and} \quad G(x)=\frac{\Gamma(2x)}{\Gamma^{2}(x)} ,
\end{equation*}
are strictly logarithmically convex and strictly logarithmically concave respectively on $(0,\infty)$. In \cite{Chen-2005-SM}, the author established a more deeper results by proving that $F(x)$ and $1/G(x)$ are strictly logarithmically completely monotonic on  $(0,\infty)$. Then in \cite{Li-Zhao-Chen-2006-SM}, the authors generalized the results of \cite{Chen-2005-SM} by proving the following results.

Let $F$ and $G$ be defined for an integer $m\geq2$ and $x\in(0,\infty)$ as
\begin{equation*}
F(x)=\frac{\Gamma(mx)}{x^{m-1}\Gamma^{m}(x)} \quad \text{and} \quad G(x)=\frac{\Gamma(mx)}{\Gamma^{m}(x)} .
\end{equation*}
Then $F(x)$ and $1/G(x)$ are strictly logarithmically completely monotonic on  $(0,\infty)$.

In this paper the main objective is to extend the results of \cite{Li-Zhao-Chen-2006-SM} to the $k$-gamma function. We begin with the following auxiliary results.

\section{Auxiliary Results}

\begin{lemma}\label{lem:Mult-Formula-k-Polygamma}
The $k$-polygamma function satisfies the identity
\begin{equation}\label{eqn:Mult-Formula-k-Polygamma}
\psi_{k}^{(r)}(mx)= \frac{1}{m^{r+1}}\sum_{s=0}^{m-1}\psi_{k}^{(r)} \left( x+\frac{sk}{m}\right),  \quad m\geq2,
\end{equation}
where $r\in \mathbb{N}$. This may be called the multiplication formula for the $k$-polygamma function.
\end{lemma}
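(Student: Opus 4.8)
The plan is to derive the identity directly from the series representation \eqref{eqn:k-Polygamma-Series}, which already expresses each $k$-polygamma function as an absolutely convergent series of exactly the same form (absolute convergence holds because $r\in\mathbb{N}$, so the general term decays like $(nk)^{-(r+1)}$ with $r+1\geq2$). First I would write the left-hand side as
\begin{equation*}
\psi_{k}^{(r)}(mx)= (-1)^{r+1} \sum_{n=0}^{\infty}\frac{r!}{(nk+mx)^{r+1}},
\end{equation*}
and expand the right-hand side $\sum_{s=0}^{m-1}\psi_{k}^{(r)}\!\left(x+\frac{sk}{m}\right)$ as a double series over $n\in\mathbb{N}_0$ and $s\in\{0,1,\dots,m-1\}$ using the same representation.

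The central computation is to clear the fraction $\frac{sk}{m}$ from each denominator. Writing $nk+x+\frac{sk}{m}=\frac{k(mn+s)+mx}{m}$ extracts a factor $m^{r+1}$ from $\left(nk+x+\frac{sk}{m}\right)^{r+1}$, so that each term of the double series becomes $\frac{m^{r+1}\,r!}{(k(mn+s)+mx)^{r+1}}$. The key observation is then that the map $(n,s)\mapsto j=mn+s$ is a bijection from $\mathbb{N}_0\times\{0,1,\dots,m-1\}$ onto $\mathbb{N}_0$; reindexing the double sum by $j$ collapses it into $m^{r+1}\sum_{j=0}^{\infty}\frac{r!}{(jk+mx)^{r+1}}$, which is precisely $m^{r+1}(-1)^{r+1}$ times the left-hand side. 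Dividing by $m^{r+1}$ yields the claimed formula.

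The main thing to be careful about is justifying the rearrangement of the double series and recognizing the bijection $j=mn+s$ as the right bookkeeping device; since all terms share the sign $(-1)^{r+1}$ and the series converge absolutely for $r\geq1$, Tonelli's theorem (or the rearrangement theorem for absolutely convergent series) legitimizes the regrouping, so this step is more a matter of clean exposition than of genuine difficulty. As an alternative route, one could instead take logarithms of the Gauss multiplication formula \eqref{eqn:k-Gauss-Mult}, differentiate once to obtain a multiplication formula for $\psi_k$ (the additive constant $\frac{1}{k}\ln m$ appearing there), and then differentiate $r\geq1$ further times so that the constant drops out, recovering \eqref{eqn:Mult-Formula-k-Polygamma}; I would favor the series argument since it is self-contained and avoids tracking the constant term.
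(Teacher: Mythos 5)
Your series argument is correct: the identity $nk+x+\tfrac{sk}{m}=\tfrac{k(mn+s)+mx}{m}$, the extraction of the factor $m^{r+1}$, and the bijection $(n,s)\mapsto j=mn+s$ from $\mathbb{N}_0\times\{0,\dots,m-1\}$ onto $\mathbb{N}_0$ all check out, and since every term of the double series carries the same sign $(-1)^{r+1}$ and decays like $n^{-(r+1)}$ with $r+1\geq 2$, the regrouping is legitimate. However, this is a genuinely different route from the paper's: the paper proves the lemma in one line by citing the Gauss multiplication formula \eqref{eqn:k-Gauss-Mult} --- i.e., exactly the ``alternative route'' you sketch at the end, taking logarithms, differentiating once to get $\psi_k(mx)=\tfrac{\ln m}{k}+\tfrac{1}{m}\sum_{s=0}^{m-1}\psi_k\!\left(x+\tfrac{sk}{m}\right)$, and then differentiating $r\geq 1$ further times so the constant disappears. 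The trade-off is roughly as you describe: your series computation is self-contained, relying only on \eqref{eqn:k-Polygamma-Series}, and makes the mechanism (the residue classes mod $m$ partitioning $\mathbb{N}_0$) completely explicit, whereas the paper's derivation is shorter but inherits whatever trust one places in the quoted multiplication formula \eqref{eqn:k-Gauss-Mult}. Either proof is acceptable; yours arguably proves slightly more in that it does not need the multiplicative constants $m^{\frac{mx}{k}-\frac12}k^{\frac{m-1}{2}}(2\pi)^{\frac{1-m}{2}}$ of \eqref{eqn:k-Gauss-Mult} to be exactly right, only the structure of the product.
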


\begin{proof}
This follows easily from \eqref{eqn:k-Gauss-Mult}.
\end{proof}

\begin{lemma}\label{lem:Integral-Reps-k-digamma-k-Polygamma}
The $k$-digamma and $k$-polygamma functions satisfy the following integral representations.
\begin{align}
\psi_{k}(x)&=\frac{\ln k - \gamma}{k} + \int_{0}^{\infty} \frac{e^{-kt}-e^{-xt}}{1-e^{-kt}}\,dt  \label{eqn:k-digamma-Int-Rep-1} \\
&=\frac{\ln k - \gamma}{k} + \int_{0}^{1} \frac{t^{k-1}-t^{x-1}}{1-t^{k}}\,dt, \label{eqn:k-digamma-Int-Rep-2} \\
\psi_{k}^{(r)}(x)&= (-1)^{r+1} \int_{0}^{\infty} \frac{t^{r}e^{-xt}}{1-e^{-kt}}\,dt  \label{eqn:k-polygamma-Int-Rep-1} \\
&= - \int_{0}^{1} \frac{(\ln t)^{r}t^{x-1}}{1-t^{k}}\,dt.  \label{eqn:k-polygamma-Int-Rep-2}
\end{align}
\end{lemma}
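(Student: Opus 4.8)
The plan is to derive all four representations from the two series expansions already on hand, namely \eqref{eqn:k-Digamma-Series-Rep-2} for $\psi_k$ and \eqref{eqn:k-Polygamma-Series} for $\psi_k^{(r)}$, by converting each summand into an integral via the elementary formula \eqref{eqn:integral-rep-vip}, then summing a geometric series under the integral sign. The two exponential-kernel forms \eqref{eqn:k-digamma-Int-Rep-1} and \eqref{eqn:k-polygamma-Int-Rep-1} come out directly this way, and the two forms over $(0,1)$ then follow from them by a single change of variable.

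First I would establish \eqref{eqn:k-polygamma-Int-Rep-1}. Starting from \eqref{eqn:k-Polygamma-Series} and replacing $x$ by $nk+x$ in \eqref{eqn:integral-rep-vip}, each summand becomes $\frac{r!}{(nk+x)^{r+1}}=\int_0^\infty t^r e^{-(nk+x)t}\,dt$. Substituting and interchanging summation with integration yields
$$\psi_k^{(r)}(x)=(-1)^{r+1}\int_0^\infty t^r e^{-xt}\sum_{n=0}^\infty e^{-nkt}\,dt,$$
and since $\sum_{n\ge 0}e^{-nkt}=(1-e^{-kt})^{-1}$ for $t>0$, this is exactly \eqref{eqn:k-polygamma-Int-Rep-1}. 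For the digamma case I would start from \eqref{eqn:k-Digamma-Series-Rep-2}, apply \eqref{eqn:integral-rep-vip} with $r=0$ to each of the two pieces of the summand to get $\frac{1}{nk+k}-\frac{1}{nk+x}=\int_0^\infty e^{-nkt}\bigl(e^{-kt}-e^{-xt}\bigr)\,dt$, and again sum the geometric series under the integral to obtain \eqref{eqn:k-digamma-Int-Rep-1}.

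The two representations over $(0,1)$, namely \eqref{eqn:k-digamma-Int-Rep-2} and \eqref{eqn:k-polygamma-Int-Rep-2}, then follow from the already-proven $(0,\infty)$ versions under the single substitution $t=-\ln u$, equivalently $u=e^{-t}$, which maps $(0,\infty)$ onto $(0,1)$ with reversed orientation. Under this change one has $e^{-kt}\mapsto u^k$, $e^{-xt}\mapsto u^x$, $dt\mapsto -du/u$, and $t^r\mapsto(-1)^r(\ln u)^r$; tracking the combined sign factor $(-1)^{r+1}(-1)^r=-1$ together with the reversal of the limits of integration reproduces the stated signs and weights, after renaming $u$ back to $t$.

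The main obstacle is the rigorous justification of the termwise interchange of summation and integration. For the polygamma identity this is immediate: every integrand $t^r e^{-(nk+x)t}$ is nonnegative, so Tonelli's theorem (monotone convergence) licenses the exchange without further comment. For the digamma identity the individual summands are not sign-definite, but the decisive factor $e^{-kt}-e^{-xt}$ keeps a fixed sign on $(0,\infty)$ — positive when $x>k$, negative when $x<k$, identically zero when $x=k$ — so the whole series $\sum_{n\ge0}e^{-nkt}\bigl(e^{-kt}-e^{-xt}\bigr)$ is sign-definite and Tonelli again applies. It remains to check integrability at the endpoints: near $t=0$ the numerator $e^{-kt}-e^{-xt}=O(t)$ cancels the simple pole of $(1-e^{-kt})^{-1}\sim(kt)^{-1}$, leaving a bounded integrand, while for the polygamma weight $t^r(1-e^{-kt})^{-1}\sim t^{r-1}/k$ is integrable at $0$ for all $r\ge1$; exponential decay handles $t\to\infty$ in both cases. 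These estimates guarantee that every integral is finite, completing the justification.
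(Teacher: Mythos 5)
Your proposal is correct and follows essentially the same route as the paper: both convert the series representations into integrals via \eqref{eqn:integral-rep-vip}, sum the geometric series $\sum_{n\ge 0}e^{-nkt}$ under the integral, and obtain the $(0,1)$ forms by the substitution $u=e^{-t}$. The only (harmless) variations are that you derive \eqref{eqn:k-polygamma-Int-Rep-1} independently from the series \eqref{eqn:k-Polygamma-Series} rather than by differentiating \eqref{eqn:k-digamma-Int-Rep-1} under the integral sign as the paper implicitly does, and that you supply the Tonelli and endpoint-integrability justifications the paper leaves tacit.
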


\begin{proof}
By using \eqref{eqn:k-Digamma-Series-Rep-2} in conjunction with \eqref{eqn:integral-rep-vip}, we obtain
\begin{align*}
\psi_{k}(x)&=\frac{\ln k - \gamma}{k} + \sum_{n=0}^{\infty}\int_{0}^{\infty}\left( e^{-kt} - e^{-xt}\right)e^{-nkt}\,dt \\
&=\frac{\ln k - \gamma}{k} + \int_{0}^{\infty}\left( e^{-kt} - e^{-xt}\right)\sum_{n=0}^{\infty}e^{-nkt}\,dt \\
&=\frac{\ln k - \gamma}{k} + \int_{0}^{\infty} \frac{e^{-kt}-e^{-xt}}{1-e^{-kt}}\,dt,
\end{align*}
which proves \eqref{eqn:k-digamma-Int-Rep-1}, and by change of variables, we obtain \eqref{eqn:k-digamma-Int-Rep-2}. Representations  \eqref{eqn:k-polygamma-Int-Rep-1} and  \eqref{eqn:k-polygamma-Int-Rep-2} respectively follow directly from \eqref{eqn:k-digamma-Int-Rep-1} and \eqref{eqn:k-digamma-Int-Rep-2}.
\end{proof}

\begin{lemma}\label{lem:Sum-Exp-vrs-Exp}
For $t>0$ and $n\in \mathbb{N}$, we have
\begin{equation}\label{eqn:Sum-Exp-vrs-Exp}
\sum_{s=1}^{n}e^{-\frac{st}{n+1}} - ne^{-t}>0 .
\end{equation}
\end{lemma}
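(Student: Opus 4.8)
The plan is to strip away the exponentials with a single substitution and reduce the claim to a transparent inequality between a geometric sum and a monomial. Set $x = e^{-t/(n+1)}$. Since $t>0$, we have $x \in (0,1)$, and the substitution turns each summand into a power of $x$:
\[
\sum_{s=1}^{n} e^{-\frac{st}{n+1}} = \sum_{s=1}^{n} x^{s}, \qquad n e^{-t} = n x^{n+1}.
\]
Thus the inequality to be proved is equivalent to the purely algebraic statement $\sum_{s=1}^{n} x^{s} > n x^{n+1}$ for every $x\in(0,1)$ and $n\in\mathbb{N}$.

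The core of the argument is then a term-by-term comparison. For each index $s\in\{1,2,\dots,n\}$ we have $s \le n < n+1$, and since $0<x<1$ the map $j\mapsto x^{j}$ is strictly decreasing, so $x^{s} > x^{n+1}$. Summing these $n$ strict inequalities yields
\[
\sum_{s=1}^{n} x^{s} > \sum_{s=1}^{n} x^{n+1} = n x^{n+1},
\]
which is exactly what we want. Undoing the substitution recovers the original strict inequality. I expect essentially no obstacle here; the only point requiring a word of care is confirming $x\in(0,1)$ (which forces the monotonicity direction) and noting that strictness is preserved because each comparison is strict for $t>0$.

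As an alternative that avoids the substitution, one may invoke the arithmetic--geometric mean inequality directly on the $n$ numbers $e^{-st/(n+1)}$. Their geometric mean is $\bigl(\prod_{s=1}^{n} e^{-st/(n+1)}\bigr)^{1/n} = e^{-\frac{t}{n(n+1)}\sum_{s=1}^{n}s} = e^{-t/2}$, using $\sum_{s=1}^{n}s = \tfrac{n(n+1)}{2}$. Hence $\frac{1}{n}\sum_{s=1}^{n} e^{-st/(n+1)} \ge e^{-t/2} > e^{-t}$ for $t>0$, and multiplying by $n$ gives the claim. Either route is short; I would favour the substitution approach since it is self-contained and makes the strictness manifest.
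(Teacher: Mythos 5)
Your main argument is correct and is essentially the paper's own proof: the substitution $x=e^{-t/(n+1)}$ merely rewrites the paper's term-by-term comparison $e^{-\frac{s}{n+1}t}>e^{-t}$ for $1\le s\le n$ (valid since $0<\tfrac{s}{n+1}<1$ and $t>0$) as $x^{s}>x^{n+1}$, followed by the same summation. The AM--GM alternative you sketch is also valid, but your primary route coincides with the paper's.
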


\begin{proof}
Note that $e^{-ut}>e^{-t}$ for all $0<u<1$ and $t>0$. Then we have
\begin{align*}
\sum_{s=1}^{n}e^{-\frac{st}{n+1}} &= e^{-\frac{1}{n+1}t} + e^{-\frac{2}{n+1}t} + e^{-\frac{3}{n+1}t} + \dots + e^{-\frac{n}{n+1}t} \\
&> e^{-t} + e^{-t} + e^{-t} + \dots + e^{-t} \\
&=ne^{-t},
\end{align*}
which completes the proof.
\end{proof}

\section{Main Results}

We are now in position to prove the main results of this paper.

\begin{theorem}\label{thm:Generalized-LCM-k-Gamma}
Let $F$ and $G$ be defined for an integer $m\geq2$, $k>0$ and $x\in(0,\infty)$ as
\begin{equation*}
F(x)=\frac{\Gamma_{k}(mx)}{x^{m-1}\Gamma_{k}^{m}(x)} \quad \text{and} \quad G(x)=\frac{\Gamma_{k}(mx)}{\Gamma_{k}^{m}(x)} .
\end{equation*}
Then
\begin{enumerate}[(a)]
\item $F(x)$ is strictly logarithmically completely monotonic on  $(0,\infty)$,
\item $1/G(x)$ is strictly logarithmically completely monotonic on  $(0,\infty)$.
\end{enumerate}
\end{theorem}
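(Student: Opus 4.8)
The plan is to reduce both statements to a single mechanism: a positive function $h$ is (strictly) logarithmically completely monotonic precisely when its negative logarithmic derivative $-[\ln h(x)]'$ is (strictly) completely monotonic, since for $r\geq1$ one has $(-1)^{r}[\ln h]^{(r)}=(-1)^{r-1}\big(-[\ln h]'\big)^{(r-1)}$. So I would set out to show that $-[\ln F(x)]'$ and $-[\ln(1/G)(x)]'$ are completely monotonic on $(0,\infty)$, starting from $\ln F(x)=\ln\Gamma_{k}(mx)-(m-1)\ln x-m\ln\Gamma_{k}(x)$ and $\ln(1/G(x))=m\ln\Gamma_{k}(x)-\ln\Gamma_{k}(mx)$, which give $-[\ln F]'=\frac{m-1}{x}+m\psi_{k}(x)-m\psi_{k}(mx)$ and $-[\ln(1/G)]'=m\psi_{k}(mx)-m\psi_{k}(x)$.

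First I would dispose of the orders $r\geq2$, where the argument is cleanest and uniform. Differentiating these $r-1\geq1$ further times sends the term $m\psi_{k}(mx)$ to $m^{r}\psi_{k}^{(r-1)}(mx)$, and the multiplication formula of Lemma \ref{lem:Mult-Formula-k-Polygamma}, applied at order $r-1\geq1$, collapses it into $\sum_{s=0}^{m-1}\psi_{k}^{(r-1)}(x+\tfrac{sk}{m})$ with no additive constant. After cancelling the $s=0$ summand, I would substitute the integral representations \eqref{eqn:k-polygamma-Int-Rep-1} and \eqref{eqn:integral-rep-vip} to place everything under one integral with nonnegative weight $t^{r-1}e^{-xt}$. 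For $1/G$ this yields the factor $(m-1)-\sum_{s=1}^{m-1}e^{-skt/m}$, which is positive because each $e^{-skt/m}<1$; for $F$ the extra $\tfrac{m-1}{x}$ term produces instead the factor $\big(\sum_{s=1}^{m-1}e^{-skt/m}-(m-1)e^{-kt}\big)/(1-e^{-kt})$, whose positivity is exactly the content of Lemma \ref{lem:Sum-Exp-vrs-Exp} with $n=m-1$ and $t$ replaced by $kt$. In both cases this gives the required strict sign for all $r\geq2$.

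The main obstacle I anticipate is the first-order case $r=1$. Here Lemma \ref{lem:Mult-Formula-k-Polygamma} is unavailable, since it is stated only for $\psi_{k}^{(r)}$ with $r\geq1$, so the reduction must instead come from differentiating the $k$-Gauss multiplication formula \eqref{eqn:k-Gauss-Mult} once, which introduces the additive constant $\tfrac{m\ln m}{k}$ that was absent at higher orders. For $1/G$ this constant appears with a favourable sign, so that $-[\ln(1/G)]'=\tfrac{m\ln m}{k}+\int_{0}^{\infty}\frac{e^{-xt}\big[(m-1)-\sum_{s=1}^{m-1}e^{-skt/m}\big]}{1-e^{-kt}}\,dt>0$ closes part (b) at once. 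For $F$, however, the same constant must be weighed against the $\tfrac{m-1}{x}$ term, and verifying the sign of $-[\ln F(x)]'$ throughout $(0,\infty)$ is the delicate step; I would attack it through the integral representation \eqref{eqn:k-digamma-Int-Rep-1} for $\psi_{k}$, together with the estimate of Lemma \ref{lem:Sum-Exp-vrs-Exp} and a careful examination of the boundary behaviour as $x\to0^{+}$ and $x\to\infty$. Securing this base case, combined with the $r\geq2$ analysis, would assemble the two completely monotonic functions and complete both parts.
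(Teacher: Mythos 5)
Your handling of part (b) and of all orders $r\geq2$ in part (a) is correct and is essentially the paper's mechanism (multiplication formula, integral representations, Lemma \ref{lem:Sum-Exp-vrs-Exp}). But the step you defer as ``the delicate step'' --- the order $r=1$ for $F$ --- is not merely delicate: it cannot be closed, because the constant $\tfrac{m\ln m}{k}$ that you correctly extract from differentiating \eqref{eqn:k-Gauss-Mult} once is fatal. Explicitly,
\begin{equation*}
-[\ln F(x)]' \;=\; -\frac{m\ln m}{k}\;+\;\int_{0}^{\infty}\left[\sum_{s=1}^{m-1}e^{-\frac{sk}{m}t}-(m-1)e^{-kt}\right]\frac{e^{-xt}}{1-e^{-kt}}\,dt ,
\end{equation*}
and the integral tends to $0$ as $x\to\infty$ by dominated convergence, so $-[\ln F(x)]'\to-\tfrac{m\ln m}{k}<0$. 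Hence $F$ is eventually increasing, so it is not completely monotonic and a fortiori not logarithmically completely monotonic. A direct check confirms this: for $k=1$, $m=2$ one has $F(x)=\Gamma(2x)/(x\Gamma^{2}(x))$ with $F(1)=1$, $F(2)=3$, $F(3)=10$. Part (a) is therefore false as stated; what is actually provable is exactly the range your argument covers, namely $(-1)^{r}[\ln F(x)]^{(r)}>0$ for all $r\geq2$ (in particular $F$ is strictly logarithmically convex, which is Merkle's original result).

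The reason the paper's proof appears to succeed where you stall is that it applies Lemma \ref{lem:Mult-Formula-k-Polygamma} at order $r-1=0$, i.e.\ to $\psi_{k}$ itself, where the identity picks up the additive constant $\tfrac{m\ln m}{k}$; the displayed computation silently drops it. Your refusal to use the lemma outside its stated range is precisely what exposes the error. So: your part (b) is complete and correct (and quietly repairs the same omission, since there the constant enters with the favourable sign); for part (a) you should prove the weakened statement for $r\geq2$ only, and note that the consequences in Corollaries \ref{cor:Ineq-Ratio-k-Gamma} and \ref{cor:Ineq-Ratio-k-Gamma-2} that rest on the monotonicity of $F$ fall with it (e.g.\ $\Gamma(4)/\Gamma^{2}(2)=6\not<2$ at $k=1$, $m=2$, $x=2$).
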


\begin{proof}
By repeated differentiations and applying \eqref{eqn:Mult-Formula-k-Polygamma}, we obtain
\begin{align*}
(\ln F(x))^{(r)}&=m^{r}\psi_{k}^{(r-1)}(mx) - m\psi_{k}^{(r-1)}(x) + (-1)^{r} \frac{(m-1)(r-1)!}{x^{r}} \\
& =\sum_{s=0}^{m-1}\psi_{k}^{(r-1)} \left( x+\frac{sk}{m}\right) - m\psi_{k}^{(r-1)}(x) + (-1)^{r} \frac{(m-1)(r-1)!}{x^{r}}, 
\end{align*}
for $r\in \mathbb{N}$. Then by applying \eqref{eqn:integral-rep-vip} and \eqref{eqn:k-polygamma-Int-Rep-1}, we obtain  
\begin{align*}
(-1)^{r}(\ln F(x))^{(r)} 
&=\int_{0}^{\infty}\left[ \sum_{s=0}^{m-1}e^{-\frac{sk}{m}t} - m + (m-1)(1-e^{-kt})  \right]\frac{t^{r-1}e^{-xt}}{1-e^{-kt}}\,dt \\
&=\int_{0}^{\infty}\left[ \sum_{s=1}^{m-1}e^{-\frac{sk}{m}t} - (m-1)e^{-kt}  \right]\frac{t^{r-1}e^{-xt}}{1-e^{-kt}}\,dt \\
&>0,
\end{align*}
which follows from Lemma \ref{lem:Sum-Exp-vrs-Exp}. This completes the proof of (a). Similarly, we obtain
\begin{align*}
\left( \ln \frac{1}{G(x)}\right)^{(r)}&=m\psi_{k}^{(r-1)}(x) - m^{r}\psi_{k}^{(r-1)}(mx) \\
&=m\psi_{k}^{(r-1)}(x) - \sum_{s=0}^{m-1}\psi_{k}^{(r-1)} \left( x+\frac{sk}{m}\right),  
\end{align*}
for $r\in \mathbb{N}$. This implies that
\begin{align*}
(-1)^{r} \left( \ln \frac{1}{G(x)}\right)^{(r)}
&=\int_{0}^{\infty}\left[ m - \sum_{s=0}^{m-1}e^{-\frac{sk}{m}t}\right]\frac{t^{r-1}e^{-xt}}{1-e^{-kt}}\,dt \\
&=\int_{0}^{\infty}\left[ 1 + \sum_{s=1}^{m-1}1 - \sum_{s=0}^{m-1}e^{-\frac{sk}{m}t}\right]\frac{t^{r-1}e^{-xt}}{1-e^{-kt}}\,dt \\
&=\int_{0}^{\infty}\sum_{s=1}^{m-1}\left(1 - e^{-\frac{sk}{m}t}\right)\frac{t^{r-1}e^{-xt}}{1-e^{-kt}}\,dt \\
&>0, 
\end{align*}
which completes the proof of (b). 
\end{proof}

\begin{corollary}\label{cor:Ineq-Ratio-k-Gamma}
Let $m\geq2$ be an integer and $k>0$. Then the inequality
\begin{equation}\label{eqn:Ineq-Ratio-k-Gamma}
k^{m-1}(m-1)! < \frac{\Gamma_{k}(mx)}{\Gamma_{k}^{m}(x)} < x^{m-1}(m-1)! , 
\end{equation}
holds if $x\in(k,\infty)$ and reverses if $x\in(0,k)$.
\end{corollary}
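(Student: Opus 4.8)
The plan is to deduce both inequalities in \eqref{eqn:Ineq-Ratio-k-Gamma} directly from the monotonicity content of Theorem \ref{thm:Generalized-LCM-k-Gamma}, anchored at the distinguished point $x=k$. First I would record the elementary fact (noted in the Introduction) that a strictly logarithmically completely monotonic function is in particular strictly completely monotonic, and hence strictly decreasing. Applying this to part (a) shows that $F$ is strictly decreasing on $(0,\infty)$, while applying it to part (b) shows that $1/G$ is strictly decreasing, so that $G$ itself is strictly increasing on $(0,\infty)$.

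Next I would evaluate the two functions at $x=k$. Using the basic values $\Gamma_{k}(k)=1$ and $\Gamma_{k}(mk)=k^{m-1}(m-1)!$, one obtains
\[
G(k)=\frac{\Gamma_{k}(mk)}{\Gamma_{k}^{m}(k)}=k^{m-1}(m-1)!,\qquad F(k)=\frac{G(k)}{k^{m-1}}=(m-1)!.
\]
The relation $G(x)=x^{m-1}F(x)$ is the bridge between the two one-sided estimates: the upper bound $G(x)<x^{m-1}(m-1)!$ is equivalent to $F(x)<(m-1)!=F(k)$, and the lower bound $k^{m-1}(m-1)!<G(x)$ is equivalent to $G(x)>G(k)$.

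Finally I would read off the result from the two monotonicities. For $x\in(k,\infty)$, strict decrease of $F$ gives $F(x)<F(k)$, which is the upper bound, and strict increase of $G$ gives $G(x)>G(k)$, which is the lower bound; together these are exactly \eqref{eqn:Ineq-Ratio-k-Gamma}. For $x\in(0,k)$ both chains reverse for the same reason, since now $F(x)>F(k)$ forces $G(x)>x^{m-1}(m-1)!$ and $G(x)<G(k)$ forces $G(x)<k^{m-1}(m-1)!$. There is essentially no genuine obstacle here; the only step requiring care is the correct evaluation at $x=k$ through the identities $\Gamma_{k}(k)=1$ and $\Gamma_{k}(mk)=k^{m-1}(m-1)!$, after which the corollary is an immediate consequence of the fact that $F$ decreases and $G$ increases, with the two bounds swapping sides as $x$ crosses the value $k$.
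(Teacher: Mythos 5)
Your proposal is correct and follows essentially the same route as the paper: both deduce from Theorem \ref{thm:Generalized-LCM-k-Gamma} that $F$ is strictly decreasing and $G$ is strictly increasing, and then compare with the values at $x=k$. You are merely more explicit than the paper in computing $F(k)=(m-1)!$ and $G(k)=k^{m-1}(m-1)!$ from $\Gamma_{k}(k)=1$ and $\Gamma_{k}(mk)=k^{m-1}(m-1)!$, which is a welcome addition rather than a deviation.
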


\begin{proof}
The conclusions of Theorem \ref{thm:Generalized-LCM-k-Gamma} imply that $F(x)$ is strictly decreasing while $G(x)$ is strictly increasing. Then for $x\in(k,\infty)$, we have $F(x)<F(k)$ which gives the right hand side of \eqref{eqn:Ineq-Ratio-k-Gamma}. Also for  $x\in(k,\infty)$ we have $G(x)>G(k)$  yielding the left hand side of \eqref{eqn:Ineq-Ratio-k-Gamma}.
The proof for the case where $x\in(0,k)$ follows the same procedure and so we omit the details.
\end{proof}

\begin{corollary}\label{cor:Ineq-Ratio-k-Gamma-2}
Let $m\geq2$ be an integer and $k>0$. Then the inequality
\begin{equation}\label{eqn:Ineq-Ratio-k-Gamma-2}
\frac{\Gamma_{k}(mx)}{\Gamma_{k}^{m}(x)} < \frac{x^{m-1}}{m},
\end{equation}
holds for $x\in(0,\infty)$ .
\end{corollary}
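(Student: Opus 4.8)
The plan is to reduce the claim to the monotonicity statement already proved in Theorem \ref{thm:Generalized-LCM-k-Gamma}(a). Observe that $G$ and $F$ are related by $G(x)=x^{m-1}F(x)$, so the desired inequality \eqref{eqn:Ineq-Ratio-k-Gamma-2} is equivalent to
\begin{equation*}
F(x)=\frac{\Gamma_{k}(mx)}{x^{m-1}\Gamma_{k}^{m}(x)} < \frac{1}{m}, \qquad x\in(0,\infty).
\end{equation*}
By Theorem \ref{thm:Generalized-LCM-k-Gamma}(a), $F$ is strictly logarithmically completely monotonic on $(0,\infty)$, hence (being completely monotonic) positive and strictly decreasing there. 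Consequently the supremum of $F$ is attained in the limit as $x\to 0^{+}$, and it suffices to show that this limiting value equals $1/m$.

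To evaluate $\lim_{x\to 0^{+}}F(x)$ I would use the functional equation $\Gamma_{k}(x+k)=x\Gamma_{k}(x)$ together with $\Gamma_{k}(k)=1$. Writing $\Gamma_{k}(x)=\Gamma_{k}(x+k)/x$ and $\Gamma_{k}(mx)=\Gamma_{k}(mx+k)/(mx)$ shows, by continuity of $\Gamma_{k}$ at $k$, that $x\Gamma_{k}(x)\to 1$ and $mx\,\Gamma_{k}(mx)\to 1$ as $x\to 0^{+}$. Rewriting
\begin{equation*}
F(x)=\frac{\Gamma_{k}(mx)}{x^{m-1}\Gamma_{k}^{m}(x)}=\frac{mx\,\Gamma_{k}(mx)}{m\bigl(x\Gamma_{k}(x)\bigr)^{m}}
\end{equation*}
and letting $x\to 0^{+}$ then yields $\lim_{x\to 0^{+}}F(x)=1/(m\cdot 1)=1/m$.

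Finally, since $F$ is strictly decreasing, for any fixed $x>0$ and any $t\in(0,x)$ one has $F(x)<F(t)<\lim_{s\to 0^{+}}F(s)=1/m$, so $F(x)<1/m$ for every $x>0$. Multiplying through by $x^{m-1}>0$ recovers \eqref{eqn:Ineq-Ratio-k-Gamma-2}. The main obstacle I anticipate is making the boundary limit rigorous: one must confirm that $\Gamma_{k}$ is continuous near $x=k$ so that $\Gamma_{k}(x+k)\to\Gamma_{k}(k)=1$, and must argue the strict inequality with care, using that a strictly decreasing function remains strictly below its supremum rather than merely bounded by it. Everything else is the elementary algebra of the reduction $G=x^{m-1}F$ and the application of Theorem \ref{thm:Generalized-LCM-k-Gamma}(a).
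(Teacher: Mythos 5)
Your proposal is correct and follows essentially the same route as the paper: both deduce from Theorem \ref{thm:Generalized-LCM-k-Gamma}(a) that $F$ is strictly decreasing and then bound $F(x)$ by $\lim_{x\to 0^{+}}F(x)=1/m$. The only difference is that you actually justify this limit via the functional equation $\Gamma_{k}(x+k)=x\Gamma_{k}(x)$, a detail the paper's proof states without computation, so your write-up is if anything slightly more complete.
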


\begin{proof}
Since $F(x)$ is strictly decreasing on $(0, \infty)$, it follows easily that
\begin{equation*}
F(x)<F(0)=\lim_{x\rightarrow 0}F(x)=\frac{1}{m},
\end{equation*}
which gives \eqref{eqn:Ineq-Ratio-k-Gamma-2}. 
\end{proof}

\begin{corollary}\label{cor:Ineq-Diff-k-Trigamma}
Let $m\geq2$ be an integer and $k>0$. Then the inequality
\begin{equation}\label{eqn:Ineq-Diff-k-Trigamma}
 \frac{1}{m}\sum_{s=0}^{m-1}\psi_{k}' \left( x+\frac{sk}{m}\right) <
\psi_{k}'(x) < 
 \frac{1}{m}\sum_{s=0}^{m-1}\psi_{k}' \left( x+\frac{sk}{m}\right) + \frac{m-1}{mx^2},
\end{equation}
holds for $x\in(0,\infty)$.
\end{corollary}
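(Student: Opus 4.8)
The plan is to extract both inequalities directly from Theorem~\ref{thm:Generalized-LCM-k-Gamma} by specializing its derivative computations to the second order, i.e.\ to $r=2$. The point is that the $k$-trigamma function $\psi_k'=\psi_k^{(1)}$ is exactly the polygamma of order $r-1=1$ that appears in the formulas for $(\ln F(x))^{(r)}$ and $(\ln(1/G(x)))^{(r)}$ when $r=2$, so no new analytic machinery is needed.

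For the right-hand inequality I would start from the expression derived in the proof of Theorem~\ref{thm:Generalized-LCM-k-Gamma},
\[
(\ln F(x))^{(r)}=\sum_{s=0}^{m-1}\psi_{k}^{(r-1)}\!\left(x+\tfrac{sk}{m}\right)-m\,\psi_{k}^{(r-1)}(x)+(-1)^{r}\frac{(m-1)(r-1)!}{x^{r}},
\]
and set $r=2$, which turns the polygamma terms into $k$-trigamma terms and the last summand into $(m-1)/x^{2}$. Since part~(a) of Theorem~\ref{thm:Generalized-LCM-k-Gamma} asserts that $F$ is strictly logarithmically completely monotonic, the case $r=2$ of the defining inequality gives $(-1)^{2}(\ln F(x))^{(2)}=(\ln F(x))^{(2)}>0$ for all $x\in(0,\infty)$. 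Rearranging this strict inequality and dividing by $m$ yields precisely the upper bound
\[
\psi_{k}'(x)<\frac{1}{m}\sum_{s=0}^{m-1}\psi_{k}'\!\left(x+\tfrac{sk}{m}\right)+\frac{m-1}{mx^{2}}.
\]

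For the left-hand inequality I would proceed identically, using instead the formula
\[
\left(\ln\frac{1}{G(x)}\right)^{(r)}=m\,\psi_{k}^{(r-1)}(x)-\sum_{s=0}^{m-1}\psi_{k}^{(r-1)}\!\left(x+\tfrac{sk}{m}\right)
\]
from the same proof. Setting $r=2$ and invoking part~(b) of Theorem~\ref{thm:Generalized-LCM-k-Gamma} (strict logarithmic complete monotonicity of $1/G$) gives $(\ln(1/G(x)))^{(2)}>0$, which after division by $m$ delivers the lower bound $\frac{1}{m}\sum_{s=0}^{m-1}\psi_{k}'(x+\tfrac{sk}{m})<\psi_{k}'(x)$. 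Combining the two bounds establishes \eqref{eqn:Ineq-Diff-k-Trigamma}.

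I do not anticipate a genuine obstacle here, since the corollary is a direct reading of the $r=2$ instance of results already proved; the only point requiring mild care is bookkeeping with the multiplication formula \eqref{eqn:Mult-Formula-k-Polygamma}, namely the identity $m^{2}\psi_{k}'(mx)=\sum_{s=0}^{m-1}\psi_{k}'(x+\tfrac{sk}{m})$, to confirm that the collapsed $k$-trigamma sum matches the stated form. Once that identification is in hand, both inequalities follow by elementary rearrangement.
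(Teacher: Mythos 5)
Your proposal is correct and is essentially the paper's own argument: the paper likewise reads off the $r=2$ case of Theorem~\ref{thm:Generalized-LCM-k-Gamma} (phrased as strict logarithmic convexity of $F$ and strict logarithmic concavity of $G$) and rearranges $(\ln F(x))''>0$ and $(\ln G(x))''<0$ to obtain the two sides of \eqref{eqn:Ineq-Diff-k-Trigamma}.
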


\begin{proof}
Infering from Theorem \ref{thm:Generalized-LCM-k-Gamma}, it follows that $F(x)$ is strictly logarithmically convex while $G(x)$ is strictly logarithmically concave. In this way,
\begin{equation*}
\left( \ln F(x)\right)''= \sum_{s=0}^{m-1}\psi_{k}' \left( x+\frac{sk}{m}\right) - m\psi_{k}'(x) +  \frac{m-1}{x^2} >0,
\end{equation*}
which yields the right hand side of \eqref{eqn:Ineq-Diff-k-Trigamma}. Also,
\begin{equation*}
\left( \ln G(x)\right)''=  \sum_{s=0}^{m-1}\psi_{k}' \left( x+\frac{sk}{m}\right) - m\psi_{k}'(x)  <0,
\end{equation*}
which gives the left hand side of \eqref{eqn:Ineq-Diff-k-Trigamma}. This completes the proof.
\end{proof}

\section{Conclusion}

We have established logarithmically complete monotonicity properties of certain ratios of the $k$-gamma function.  As a consequence, we derived some inequalities involving the $k$-gamma and the $k$-trigamma functions. The established results could trigger a new research direction in the theory of inequalities and special functions.

\subsection*{Acknowledgement}
The authors are grateful to the anonymous referees for thorough reading of the manuscript.

\subsection*{Competing Interests}
The authors declare that they have no competing interests.

\subsection*{Authors' Contributions}
All the authors contributed significantly in writing this article. The authors read and approved
the final manuscript.



\begin{thebibliography}{99}

\bibitem{Abramowitz-Stegun-1972} M. Abramowitz and I. A. Stegun (Eds), \textit{Handbook of Mathematical Functions with Formulas, Graphs, and Mathematical Tables}, National Bureau of Standards, Applied Mathematics Series 55, 10th Printing, Washington, 1972. \url{http://people.math.sfu.ca/\~cbm/aands/abramowitz\_and\_stegun.pdf}

\bibitem{Alzer-Berg-2002-AASF} H. Alzer and C. Berg, Some classes of completely monotonic functions, \textit{Ann. Acad. Scient. Fennicae}, \textbf{27}(2002), 445-460. \url{https://www.acadsci.fi/mathematica/Vol27/alzer.pdf}

\bibitem{Chen-2005-SM} C-P. Chen, Complete monotonicity properties for a ratio of gamma functions, \textit{Ser. Mat.}, \textbf{16}(2005), 26-28. \url{https://www.jstor.org/stable/pdf/43666610.pdf}

\bibitem{Chen-Choi-2017-MIA} C-P. Chen and J. Choi, Completely monotonic functions related to Gurland's ratio for the gamma function,  \textit{Math. Inequal. Appl.}, \textbf{20}(3)(2017), 651-659. DOI: 10.7153/mia-20-43

\bibitem{Diaz-Pariguan-2007-DM} R. D\'{i}az and E. Pariguan, On hypergeometric functions and Pochhammer $k$-symbol, \textit{Divulg. Mat.}, \textbf{15}(2007) 179-192. \url{https://www.emis.de/journals/DM/v15-2/art8.pdf}

\bibitem{Kokologiannaki-Sourla-2013-JIASF} C. G. Kokologiannaki and V. D. Sourla, Bounds for $k$-gamma and $k$-beta functions, \textit{J. Inequal. Spec. Funct.}, \textbf{4}(3)(2013), 1-5. \url{http://46.99.162.253:88/jiasf/repository/docs/JIASF4-3-1.pdf}

\bibitem{Krasniqi-2010-SM} V. Krasniqi, Inequalities and monotonicity for the ration of $k$-gamma functions, \textit{Scientia Magna}, \textbf{6}(1)(2010), 40-45. \url{http://fs.unm.edu/ScientiaMagna6no1.pdf#page=46}

\bibitem{Li-Zhao-Chen-2006-SM} A-J. Li, W-Z. Zhao and C-P. Chen, Logarithmically complete monotonicity and Shur- convexity for some ratios of gamma functions, \textit{Ser. Mat.}, \textbf{17}(2006), 88-92. \url{https://www.jstor.org/stable/43660752}

\bibitem{Merkle-1997-SM} M. Merkle, On log-convexity of a ratio of gamma functions, \textit{Ser. Mat.}, \textbf{8}(1997), 114-119. \url{https://www.jstor.org/stable/43666390}

\bibitem{Merkle-2014-ANTATSF} M. Merkle, \textit{Completely monotone functions: A digest}, In: Milovanović G., Rassias M. (eds) Analytic Number Theory, Approximation Theory, and Special Functions. Springer, New York, NY., (2014), 347-364. DOI: 10.1007/978-1-4939-0258-3\_12.

\bibitem{Miller-Samko-2001-ITSF} K. S. Miller and S. G. Samko, Completely monotonic functions, \textit{Integr. Transf. and Spec. Funct.}, \textbf{12}(4)(2001), 389-402. DOI: 10.1080/10652460108819360

\bibitem{Mubeen-Iqbal-2015-AIA} S. Mubeen and S. Iqbal, Some inequalities for the gamma $k$-function, \textit{Adv. Inequal. Appl.}, \textbf{2015}(2015), At ID: 10, 1-9. \url{http://www.scik.org/index.php/aia/article/view/2252}

\bibitem{Mubeen-Rehman-Shaheen-2014-BJ} S.  Mubeen,  A.  Rehman and F. Shaheen, Properties of $k$-gamma, $k$-beta and $k$-psi functions, \textit{Bothalia Journal}, \textbf{44}(2014) 372-380. 

\bibitem{Qi-Guo-2004-RGMIA} F. Qi and B-N. Guo, Complete monotonicities of functions involving the gamma and digamma functions, \textit{RGMIA Res. Rep. Coll.}, \textbf{7}(1)(2004), Art. 6. \url{http://www.rgmia.org/papers/v7n1/minus-one.pdf}

\bibitem{Rehman-et-al-2015-OM} A. Rehman, S. Mubeen, R. Safdar and N. Sadiq, Properties of $k$-beta function with several variables, \textit{Open Math.}, \textbf{13}(1)(2015), 308-320. DOI: 10.1515/math-2015-0030

\bibitem{Yin-Huang-Song-Duo-2018-JIA} L. Yin, L-G. Huang, Z-M. Song and X. K. Dou, Some monotonicity properties and inequalities for the generalized digamma and polygamma functions, \textit{J. Inequal. Appl.}, \textbf{2018}(2018), Art ID:249, 13 pages. DOI: 10.1186/s13660-018-1844-2

\bibitem{Yin-Huang-Lin-Wang-2018-ADE} L. Yin, L-G. Huang, X-L. Lin and Y-L. Wang, Monotonicity, concavity, and inequalities related to the generalized digamma function, \textit{Adv. Difference Equ.}, \textbf{2018}(2018), Art ID:246, 9 pages. DOI: 10.1186/s13662-018-1695-7

\end{thebibliography}
\end{document}